\documentclass [11pt]{amsart}
\usepackage{amscd, amsmath,amsthm,amssymb, amsfonts, epsfig}
\usepackage{bbm}
\usepackage{graphicx}
\usepackage[utf8]{inputenc}
\usepackage{xcolor}
\usepackage{mathrsfs}  
\usepackage{esvect}
\usepackage{enumitem}
\usepackage{indentfirst}
\usepackage{caption}
\usepackage{subfigure,tikz}
\usepackage{tkz-euclide}
\usepackage{comment}
\usepackage{outlines}
\usepackage{multicol}
\usepackage{pgfplots}
\usepgfplotslibrary{fillbetween}
\usepackage{comment}

\usetikzlibrary{decorations.markings}

\usetikzlibrary{intersections,positioning, calc, patterns}

\numberwithin{equation}{section}

\newtheorem{theorem}{Theorem}[section]
\newtheorem{corollary}[theorem]{Corollary}
\newtheorem{lemma}[theorem]{Lemma}

\newtheorem{conjecture}{Conjecture}

\setlength{\textheight}{615pt}
\setlength{\textwidth}{360pt}

\pgfplotsset{compat=1.18}
\begin{document}

\title[First eigenvalue of free boundary minimal surfaces of genus zero]{A symmetry condition for genus zero free boundary minimal surfaces attaining the first eigenvalue of one}
\author{Dong-Hwi Seo}
\address{Research Institute of Mathematics, Seoul National University, 1 Gwanak-ro, Gwanak-gu, Seoul 08826, Republic of Korea}
\email{donghwi.seo26@gmail.com}

\subjclass[2020]{Primary 53A10; Secondary 58C40, 52A10}
\keywords{minimal surface, Steklov eigenvalue problem, free boundary problem}

\begin{abstract}
    An embedded free boundary minimal surface in the 3-ball has a Steklov eigenvalue of one due to its coordinate functions. Fraser and Li conjectured that whether one is the first nonzero Steklov eigenvalue. In this paper, we show that if an embedded free boundary minimal surface of genus zero, with $n$ boundary components, in the 3-ball has $n$ distinct reflection planes, then one is the first eigenvalue of the surface. 
\end{abstract}

\maketitle

\section{Introduction}
Let $\mathbb{B}^3$ be the standard Euclidean 3-ball. A surface $M^2$ with boundary $\partial M$ is a free boundary minimal surface in $\mathbb{B}^3$ if the mean curvature of $M$ is everywhere zero and $M$ meets $\partial \mathbb{B}^3$ perpendicularly along $\partial M$. This is equivalent to the requirements that $M$ be a critical point of the area functional among all variations of $M$ in $\mathbb{B}^3$ whose boundaries are contained in $\partial \mathbb{B}^3$. For general references to this topic, we refer the reader to \cite{Li20FBM, Car2019FBM}. 

Free boundary minimal surfaces in $\mathbb{B}^3$ have an interesting connection with the Steklov eigenvalue problem, initially investigated by Fraser and Scheon \cite{FS11FSE}. The Steklov eigenvalue problem \cite{Ste1902problemes} of a smooth bounded domain $\Omega$ in a Riemannian manifold is to find $\sigma\in \mathbb{R}$ for which there exists a function $u\in C^{\infty}(\Omega)$ satisfying
\begin{align} \label{problem}
\left\{
\begin{array}{rcll}
     \Delta u &=& 0&   \text{in   } \Omega  \\
     \frac{\partial u}{\partial \eta} &=& \sigma u &\text{on   } \partial \Omega
\end{array} \right.
,
\end{align}
where $\eta$ is the outward unit conormal vector of $\Omega$ along $\partial \Omega$. We call $u$ an Steklov eigenfunction and $\sigma$ an Steklov eigenvalue. It is known that the eigenvalues form a sequence $0=\sigma_0<\sigma_1\le\dots \rightarrow \infty$. See \cite{GP17SGS, CGGS2024SRD} for general surveys. Fraser and Schoen observed that every coordinate function of a free boundary minimal surface in $\mathbb{B}^n$ satisfies (\ref{problem}) with $\sigma=1$ \cite{FS11FSE}. Thus, the coordinate functions are Steklov eigenfunctions with eigenvalue 1. 

In \cite{FL14CSE}, Fraser and Li proposed the following conjecture.

\begin{conjecture}\label{conj: Fraser_Li} Let $N$ be an embedded free boundary minimal hypersurface in $\mathbb{B}^n$. Then, the first Steklov eigenvalue $\sigma_1(N)$ is 1. 
\end{conjecture}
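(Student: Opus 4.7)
The plan is to work with the variational characterization
\[
\sigma_1(N) \;=\; \inf\left\{\, \frac{\int_N |\nabla u|^2\, dV}{\int_{\partial N} u^2\, dA} \;:\; u \in H^1(N),\ \int_{\partial N} u\, dA = 0 \,\right\},
\]
and establish the two inequalities $\sigma_1(N)\le 1$ and $\sigma_1(N)\ge 1$ separately. The first inequality is essentially a Hersch-type balancing argument. Since every restriction $x_i|_N$ of an ambient coordinate is a Steklov eigenfunction with eigenvalue $1$, I would precompose the inclusion $\iota:N\hookrightarrow \mathbb{B}^n$ with a suitably chosen conformal automorphism $\phi$ of $\mathbb{B}^n$ so that each coordinate of $\phi\circ\iota$ has vanishing boundary integral. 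Existence of such a $\phi$ follows from a standard degree argument on the moduli of conformal automorphisms of the ball. The $n$ balanced coordinate functions are then admissible test functions, each with Rayleigh quotient equal to $1$, giving $\sigma_1(N)\le 1$.

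The substantive content is the reverse inequality $\sigma_1(N)\ge 1$. I would proceed by contradiction: assume there is a Steklov eigenfunction $\varphi$ on $N$ with eigenvalue $\sigma<1$ and $\int_{\partial N}\varphi\, dA = 0$. Because $N$ is \emph{embedded}, it separates $\mathbb{B}^n$ into two open regions $\Omega_\pm$. I would harmonically extend $\varphi$ into each region, producing $\Phi_\pm$ on $\Omega_\pm$ with $\Phi_\pm|_N=\varphi$ and mixed boundary data on $\partial\mathbb{B}^n\cap\overline{\Omega_\pm}$ controlled by the Steklov condition $\partial_\eta\varphi=\sigma\varphi$ along $\partial N$. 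The plan is then to derive a contradiction using a Reilly- or Pohozaev-type identity applied to the position vector field $x$, which is the natural choice here because $x\cdot\eta=1$ on $\partial\mathbb{B}^n$, $\Delta_N x = 0$ by minimality of $N$, and the free boundary condition makes $x$ radial along $\partial N$. The hope is that coupling the two harmonic extensions through the identities forced by these geometric relations produces $\int_N|\nabla\varphi|^2\ge\int_{\partial N}\varphi^2$, contradicting $\sigma<1$.

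The main obstacle is precisely this last step: Conjecture~\ref{conj: Fraser_Li} is open in full generality, and what is missing is a mechanism that converts the global topological hypothesis of embeddedness into a sharp analytic inequality on $N$. Counterexamples are known among non-embedded free boundary minimal surfaces, so the embeddedness input must enter in an essential way; yet embeddedness is a purely topological condition while $\sigma<1$ is analytic and local, and no Reilly-type formula currently available bridges this gap. Existing partial results --- including the main theorem of the present paper --- sidestep this difficulty by imposing symmetry, which reduces the eigenvalue problem to isotypic subspaces on which the first eigenfunction is forced to coincide with a coordinate function by an explicit dimension count. Without extra structure, a general proof would likely require a new free-boundary integral identity tailored to embeddedness, or a variational comparison with model free boundary minimal hypersurfaces, neither of which is presently known.
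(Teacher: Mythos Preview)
The statement you are attempting is Conjecture~\ref{conj: Fraser_Li}, which the paper does \emph{not} prove: it is the Fraser--Li conjecture, stated as an open problem, and the paper's contribution (Theorem~\ref{thm: n symmetries}) establishes only the special case of genus-zero surfaces in $\mathbb{B}^3$ with $n$ reflection symmetries. So there is no ``paper's own proof'' to compare against, and your proposal is, appropriately, not a proof but an outline that correctly identifies where the argument breaks down.

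Two remarks on what you wrote. First, the inequality $\sigma_1(N)\le 1$ is easier than you indicate: no Hersch balancing is needed. Since each $x_i$ is harmonic on $N$ and satisfies $\partial x_i/\partial\eta = x_i$ on $\partial N$, the divergence theorem gives $\int_{\partial N} x_i = \int_N \Delta x_i = 0$ directly, so the coordinate functions are already admissible test functions with Rayleigh quotient exactly $1$ (this is Lemma~\ref{lemm: perpendicular to constant}(\ref{lemm: perpendicular to constant: coordinate function}) in the paper). Your proposed precomposition with a conformal automorphism $\phi$ of $\mathbb{B}^n$ is both unnecessary and problematic: the coordinates of $\phi\circ\iota$ are in general \emph{not} harmonic on $N$, so your claim that ``each [has] Rayleigh quotient equal to $1$'' is false for the balanced functions you construct.

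Second, your assessment of the hard direction is accurate. The harmonic-extension/Reilly-type strategy you sketch has not been made to work, and as you note, embeddedness must enter essentially (there are immersed counterexamples). The paper, like prior partial results, bypasses this by imposing symmetry so that a putative first eigenfunction with $\sigma_1<1$ is forced to behave like a coordinate function on the boundary, leading to a contradiction via nodal-set and sign arguments. That mechanism is unavailable in the general embedded case, and your proposal does not supply a replacement; so as a proof of the conjecture it has a genuine, acknowledged gap.
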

Note that without the embeddedness assumption, there are some examples with $\sigma_1(M)<1$ by Fern\'andez-Hauswirth-Mira \cite{FHM2023FBM} and Kapouleas-McGrath \cite{KM2022FBM}. This conjecture can be seen as an analog of Yau's conjecture for closed minimal hypersurfaces in $\mathbb{S}^n$ \cite{yau1982PSS}. See surveys in \cite{Cho2006MSS, Bre2013MSS} and related results in \cite{CS2009FES, TY2013IFY1, TXY2014IFY2}. 

Conjecture \ref{conj: Fraser_Li} has been studied by several authors. McGrath showed that if an embedded free boundary minimal annulus in $\mathbb{B}^3$ has the reflection symmetries by three orthogonal planes passing through the origin, then the surface supports the conjecture \cite{McG18ACC}. In this work and his following work with Kusner \cite{KM20arXiv}, he showed that the conjecture holds for various classes of embedded free boundary minimal surfaces in $\mathbb{B}^3$. In particular, Kusner and McGrath showed that if an embedded free boundary minimal annulus has antipodal symmetry, the surface satisfies the conjecture.  Additionally, the author showed that the conjecture holds for several symmetric free boundary minimal annuli \cite{Seo2021SSC}. For example, if an embedded free boundary minimal annulus has two distinct reflection planes, then the surface has the first eigenvalue of one \cite[Theorem 1.2]{Seo2021SSC}. Here, we remark that there were no restrictions on reflection planes.

The main theorem in this paper is as follows. 
\begin{theorem} \label{thm: n symmetries}
Let $\Sigma$ be an embedded free boundary minimal surface of genus zero with $n$ boundary components in the unit ball $\mathbb{B}^3$. If $\Sigma$ has $n$ reflection symmetries, then $\sigma_1(\Sigma)=1$.
\end{theorem}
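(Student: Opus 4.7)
The plan is to extend the strategy used for free boundary minimal annuli in \cite{Seo2021SSC} to the genus zero setting with $n$ boundary components. I argue by contradiction: assume $\sigma_1(\Sigma) < 1$, let $u$ be an associated eigenfunction, and combine the $n$ reflection symmetries with a Courant nodal-domain count to derive a contradiction.

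First I fix the group-theoretic setup. Any ambient reflection preserving $\Sigma$ must also preserve $\partial \mathbb{B}^3$ and therefore fix the origin, so each reflection plane $P_i$ passes through the origin. The group $G$ generated by $R_1, \dots, R_n$ is then a finite subgroup of $O(3)$, and it commutes with the Dirichlet-to-Neumann operator on $\Sigma$, so every Steklov eigenspace decomposes into $G$-isotypic pieces; I may replace $u$ by a nonzero component lying in a single irreducible representation. The coordinate functions $x_1, x_2, x_3$ span a three-dimensional $G$-subrepresentation $V \subset C^{\infty}(\Sigma)$ of Steklov eigenfunctions at eigenvalue exactly $1$, on which $G$ acts through its defining representation on $\mathbb{R}^3$.

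The heart of the argument is nodal-domain bookkeeping. The case in which $u$ is $G$-invariant would be handled by descending $u$ to a mixed Steklov--Neumann eigenfunction on a fundamental domain $D$ for $G$ and showing that its first eigenvalue is at least $1$, using that $D$ is topologically simpler than $\Sigma$. Otherwise $u$ is odd under at least one reflection $R_i$, and then its nodal set contains the fixed curve $F_i = \Sigma \cap P_i$, which, by the free boundary condition, meets $\partial \Sigma$ orthogonally and splits $\Sigma$ into two mirror regions. If $u$ is simultaneously odd under two or more distinct reflections, I would show that the union of the corresponding fixed curves partitions $\Sigma$ into strictly more than two connected components; this would rely on $\Sigma$ having genus zero, an Euler characteristic computation using $\chi(\Sigma) = 2 - n$, and the action of $G$ on the $n$ boundary circles. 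The Courant nodal domain theorem for the Steklov problem then forces $u$ to be odd under exactly one reflection.

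Finally I match $u$ against $V$. With the $G$-isotypic type of $u$ pinned down to that of a function odd under a single $R_i$, I would show via the decomposition of $V$ into $G$-isotypic components that $V$ already exhausts the first eigenspace in that component, whence $u \in V$ has eigenvalue $1$, contradicting $\sigma_1 < 1$. The main obstacle is the topological step of proving that the union of two or more fixed reflection curves on a genus zero surface with $n$ boundary components and $n$ distinct reflection symmetries must partition $\Sigma$ into strictly more than two pieces; handling arbitrary configurations of $P_1, \dots, P_n$ relative to the boundary circles, rather than assuming a special arrangement such as a common rotation axis, is the novel difficulty that goes beyond the $n = 2$ annular analysis of \cite{Seo2021SSC}.
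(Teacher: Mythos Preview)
Your plan has a structural gap that collapses the case analysis. A key fact in this subject is McGrath's symmetry lemma (Lemma~\ref{lemm: symmetry of a first eigenfunction} in the paper, originally \cite[Lemma~3.2]{McG18ACC}): if $\sigma_1(\Sigma)<1$, then \emph{every} first Steklov eigenfunction is invariant under every reflection symmetry of $\Sigma$. Hence under your contradiction hypothesis the eigenfunction $u$ is automatically $G$-invariant, and the ``odd under at least one reflection'' branch---where you place all of the substantive work---is vacuous. The entire burden therefore falls on the $G$-invariant case, which you dispatch in one sentence by ``descending to a fundamental domain $D$ and using that $D$ is topologically simpler.'' That is not an argument: the fundamental domain is still a genus-zero surface with boundary, the mixed Steklov--Neumann problem on it has no a priori lower bound of~$1$, and nothing prevents a symmetric eigenfunction from having eigenvalue below~$1$. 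Separately, even taken on its own terms your final matching step is circular: showing that $u$ lies in the same $G$-isotypic component as some coordinate function does not force $u$ to have eigenvalue~$1$, since isotypic type constrains symmetry, not spectrum.

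The paper's proof is organized around a completely different idea. It attaches to each boundary circle $(\partial\Sigma)_i$ the unit ``flux'' vector $f_i=\int_{(\partial\Sigma)_i}x\big/\bigl|\int_{(\partial\Sigma)_i}x\bigr|\in\mathbb{S}^2$, proves these $n$ points are distinct, and then uses elementary reflection geometry on the finite set $\{f_1,\dots,f_n\}$ to show (i) exactly one of the $n$ reflection planes passes through each $f_i$, (ii) the reflection group acts transitively on the boundary components, and (iii) all $n$ planes must share a common axis. Once the axis is, say, the $x_3$-axis, the (necessarily $G$-invariant) eigenfunction $u$ has all its boundary nodal points on a single level set $\{x_3=c\}$, and the integral $\int_{\partial\Sigma}u(x_3-c)$ is strictly signed, contradicting the orthogonality of $u$ to the coordinate functions. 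None of your proposed ingredients---isotypic decomposition, Courant counts on unions of fixed curves, or matching against $V$---appear; the decisive step is the reduction to a common axis via the flux points, which is what replaces the ``arbitrary configuration of planes'' obstacle you identify at the end.
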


The main idea of the proof involves associating the boundary components with $n$ distinct points in $\mathbb{S}^2$ (Corollary \ref{cor: flux components}). These points are derived from (\ref{eqn: flux}), which is motivated by the flux of a minimal surface. This idea, combined with a basic linear algebraic observation, simplifies the analysis of reflections (for example, see Corollary \ref{cor: f_i and a boundary component}). Building on established techniques for genus zero free boundary minimal surfaces (Section \ref{sec:preliminaries}), we can obtain the main theorem.

This theorem can be viewed as an extension of the author's previous work on free boundary minimal annuli \cite{Seo2021SSC} to free boundary minimal surfaces of genus zero with $n$ boundary components. Since there are no restrictions on reflection planes, the theorem includes a new symmetry criterion on a surface to achieve the first Steklov eigenvalue of one. For example, the theorem explains that if $\Sigma$ has $n$ distinct reflection planes sharing a common axis, then $\sigma_1(\Sigma)=1$.

\subsection*{Acknowledgments} This work was supported by the National Research Foundation of Korea (NRF) grant funded by the Korean Government (MSIT) (No. 2021R1C1C2005144).

\section{Preliminaries}\label{sec:preliminaries}

\subsection{General properties}
Let $M$ be a compact embedded free boundary minimal surface of any topological type in $\mathbb{B}^3$.  In this paper, we define a coordinate plane as a plane passing through the origin.

\begin{theorem}[two-piece property, {\cite[Theorem A]{LM21TPP}}] \label{thm: two-piece property} Suppose $M$ is not homeomorphic to a disk. Then, a coordinate plane dissects $M$ by exactly two components.
\end{theorem}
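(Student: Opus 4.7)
The plan is to adapt an Alexandrov-type reflection and moving-plane argument to the free boundary setting, in analogy with Ros's two-piece property for closed embedded minimal surfaces in space forms. The key object is the linear height function $\ell(x)=\langle x, v\rangle$, where $v$ is a unit vector normal to the coordinate plane $\Pi$. Restricted to $M$, this function is harmonic and satisfies $\partial\ell/\partial\eta = \ell$ along $\partial M$, so $\ell|_M$ is a Steklov eigenfunction with eigenvalue $1$, and the connected components of $M\setminus\Pi$ are precisely its nodal domains.

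First I would rule out zero or one nodal domains, showing $M\cap\Pi\neq\emptyset$. If $\ell|_M$ had constant sign, then by Green's identity $0=\int_M\Delta\ell\,dA = \int_{\partial M}\partial_\eta\ell\,ds = \int_{\partial M}\ell\,ds$ would force $\ell\equiv 0$ on $\partial M$, hence on $M$ by unique continuation. Then $M\subset\Pi$, but any connected free boundary minimal surface lying in $\Pi$ must be the flat equatorial disk $\Pi\cap\mathbb{B}^3$ (each of its boundary circles would have to coincide with the unique great circle $\Pi\cap\partial\mathbb{B}^3$), contradicting the non-disk hypothesis.

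Next, assume for contradiction that $M\setminus\Pi$ has $k\geq 3$ components, and let $\mathbb{B}^{\pm}$ denote the two open half-balls cut off by $\Pi$. By pigeonhole, two components $U_1, U_2$ lie on the same side, say $U_1, U_2\subset\mathbb{B}^+$. Let $R$ denote the reflection across $\Pi$; both the ball metric and the free boundary condition are $R$-invariant, so $R(M)$ is again a free boundary minimal surface. I would then study how $R(U_1)\subset\mathbb{B}^-$ interacts with $M\cap\overline{\mathbb{B}^-}$: either they are disjoint, in which case a moving-plane argument with planes parallel to $\Pi$ brings $R(U_1)$ into a first tangential contact with some component of $M\cap\mathbb{B}^-$, where the interior maximum principle (or the Hopf boundary point lemma at $\partial\mathbb{B}^3$, legitimate because the free boundary condition makes $\partial\mathbb{B}^3$ behave like a reflecting barrier) forces local coincidence of the two minimal surfaces; or they already meet tangentially, in which case coincidence is immediate. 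Unique continuation then promotes this to global coincidence of a piece of $M$ with a piece of $R(M)$, which contradicts embeddedness after reconciling with the topology implied by having at least three components on the original side.

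The main obstacle will be executing the moving-plane step cleanly in the presence of the spherical boundary: straight-line translations do not preserve $\mathbb{B}^3$, so the usual Alexandrov scheme must be implemented either by restricting to interior contacts or by replacing translations with an appropriate family of conformal (Möbius) motions that respect $\partial\mathbb{B}^3$, and the free boundary condition must be carefully exploited at boundary contact points. Should the reflection scheme prove too delicate, a cleaner backup strategy is a Courant-type nodal domain argument for Steklov eigenfunctions: three nodal domains of $\ell|_M$ produce, by suitable linear combinations of $\ell$ restricted to the $U_i$'s, test functions with Rayleigh quotient at most $1$ and mean zero on $\partial M$, which one would then play off against the structure of the Steklov spectrum using the non-disk hypothesis.
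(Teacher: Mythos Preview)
This theorem is not proved in the present paper; it is quoted from Lima--Menezes \cite{LM21TPP} and used as a black box, so there is no in-paper proof to compare your proposal against. Your argument that $M\setminus\Pi$ has at least two components (equivalently, that $\ell$ changes sign unless $M$ is the flat equatorial disk) is correct.

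Both of your routes to ``at most two components'' have genuine gaps, however. For the reflection/moving-plane approach, you correctly flag that Euclidean translations do not preserve $\mathbb{B}^3$, but your proposed remedy of replacing them by M\"obius motions of the ball does not work: M\"obius maps are conformal but not isometric, so they do not send minimal surfaces to minimal surfaces, and the tangency/maximum-principle comparison at a first contact point is lost. For the Courant-type backup, take two nodal domains $U_1,U_2$ on the same side of $\Pi$ and set $\varphi_i=\ell\cdot\mathbbm{1}_{U_i}$; integration by parts gives every nontrivial combination $a\varphi_1+b\varphi_2$ Steklov Rayleigh quotient exactly $1$, and one can choose $(a,b)\neq(0,0)$ making it $L^2(\partial M)$-orthogonal to constants. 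This only reproves $\sigma_1(M)\le 1$. If $\sigma_1(M)=1$, the test function would then be a genuine first eigenfunction vanishing on an open set, hence identically zero by unique continuation, forcing $a=b=0$; so the argument actually yields $\sigma_1(M)<1$. But that is \emph{not} a contradiction: the two-piece property is asserted for \emph{every} embedded non-disk free boundary minimal surface, with no hypothesis on $\sigma_1$, and indeed the whole thrust of Conjecture~\ref{conj: Fraser_Li} is that $\sigma_1<1$ has not been ruled out in general. Your phrase ``play off against the structure of the Steklov spectrum using the non-disk hypothesis'' does not supply any mechanism that closes this gap.
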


From now on, a reflection means a mapping from $\mathbb{R}^3$ to itself that is an isometry with a plane as a set of fixed points. We denote by $R_{\Pi}$ the reflection through a plane $\Pi$.
\begin{lemma} \label{lemm: prop of a reflection}
\begin{enumerate}
    \item \label{lemm: prop of a reflection: identical}  Let $p_1, p_2 \in \mathbb{S}^2$ be two distinct points. If two reflections map $p_1$ into $p_2$, then the two reflection planes are identical.
    \item \label{lemm: prop of a reflection: origin} A reflection plane of $M$ passes through the origin.
\end{enumerate}
\end{lemma}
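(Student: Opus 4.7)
My plan is to handle the two parts of the lemma separately. Part (1) is a purely elementary fact about Euclidean geometry, while part (2) requires combining the free boundary condition with the reflection symmetry and Green's identity.

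For part (1), I would start from the observation that if a reflection $R_\Pi$ satisfies $R_\Pi(p_1)=p_2$ with $p_1\neq p_2$, then the plane $\Pi$ is characterized by two conditions: (i) it contains the midpoint $(p_1+p_2)/2$, and (ii) the nonzero vector $p_2-p_1$ is a normal to $\Pi$. These two data determine a unique plane, so any two reflection planes that map $p_1$ to $p_2$ must coincide. Note that this argument does not use $p_1,p_2 \in \mathbb{S}^2$.

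For part (2), write $\Pi = \{x \in \mathbb{R}^3 : x\cdot\nu = d\}$ for a unit vector $\nu$ and some $d \in \mathbb{R}$; the goal is to show $d=0$. Consider the linear function $u(x)=x\cdot\nu$. Since $M$ is minimal, $u$ is harmonic on $M$, and the free boundary condition identifies the outward unit conormal $\eta$ of $M$ along $\partial M$ with the position vector $p$ (the outward unit normal of $\mathbb{S}^2$ at $p$), so
\[
\frac{\partial u}{\partial \eta}(p) \;=\; \eta\cdot\nu \;=\; p\cdot\nu \;=\; u(p) \qquad \text{for all } p \in \partial M.
\]
Green's identity then yields
\[
\int_{\partial M} u \, ds \;=\; \int_{\partial M} \frac{\partial u}{\partial \eta}\, ds \;=\; \int_M \Delta u\, dA \;=\; 0.
\]
Separately, a direct computation gives $u\circ R_\Pi = 2d-u$, so $u-d$ is odd under $R_\Pi$; since $R_\Pi$ preserves $\partial M$ as an isometry of $\mathbb{R}^3$, integration gives $\int_{\partial M}(u-d)\, ds = 0$, i.e., $\int_{\partial M} u \, ds = d\,|\partial M|$. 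Combining the two identities, $d\,|\partial M|=0$, and since $|\partial M|>0$ we conclude $d=0$, so $\Pi$ passes through the origin.

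I expect part (2) to be the only nontrivial step; part (1) is immediate. The main conceptual point is that the Steklov-type relation $\partial u/\partial\eta = u$ for linear $u$ (which is precisely why coordinate functions are Steklov eigenfunctions with eigenvalue one) pairs cleanly with the odd/even splitting induced by $R_\Pi$, forcing the affine offset $d$ to vanish. I do not foresee additional technical obstacles.
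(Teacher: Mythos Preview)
Your proof is correct, but for part~(2) you take a genuinely different route from the paper.

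For part~(1), both arguments are elementary and equivalent: you characterize $\Pi$ directly as the perpendicular bisector hyperplane of the segment $p_1p_2$ (unique since $p_1\neq p_2$), while the paper composes the two reflections and observes that the resulting rotation fixes $p_1$ off its axis, forcing it to be the identity. Your version is slightly more direct and, as you note, does not use $p_1,p_2\in\mathbb{S}^2$.

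For part~(2), the paper gives a purely geometric argument that uses only $\partial M\subset\mathbb{S}^2$ and part~(1): pick $p_1\in\partial M\setminus\Pi$, set $p_2=R_\Pi(p_1)\in\partial M\subset\mathbb{S}^2$; since $|p_1|=|p_2|=1$, the perpendicular bisector of $p_1p_2$ passes through the origin, and by part~(1) this bisector is $\Pi$. No minimality, no free boundary condition, no integration is needed. Your argument instead exploits the Steklov identity $\partial_\eta u=u$ for linear $u$ together with Green's formula and the odd symmetry of $u-d$ under $R_\Pi$. This is correct and self-contained, and it dovetails nicely with the eigenvalue theme of the paper, but it invokes more structure than necessary: the paper's proof would apply to any compact surface with nonempty boundary on $\mathbb{S}^2$ possessing a reflection symmetry, whereas yours genuinely uses that $M$ is free boundary minimal. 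On the other hand, your argument does not need the case split for the equatorial disk and does not rely on part~(1).
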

\begin{proof}
For (\ref{lemm: prop of a reflection: identical}), let $\Pi_1$ and $\Pi_2$ be such two reflection planes. Clearly, $p_1 \notin \Pi_1, \Pi_2$ and $R_{\Pi_1}^{-1}\circ R_{\Pi_2}$ is a rotation about an axis. The axis is the intersection of $\Pi_1$ and $\Pi_2$, so it does not contain $p_1$. Since, $R_{\Pi_1}\circ R_{\Pi_2}^{-1}(p_1)=p_1$, $R_{\Pi_1}^{-1}\circ R_{\Pi_2}$ is the identical map. Thus $\Pi_1=\Pi_2$. For (\ref{lemm: prop of a reflection: origin}), let $\Pi$ be a reflection plane of $M$. If $M$ is an equatorial disk, $\Pi$ passes through the origin. Otherwise, we can take two distinct points $p_1, p_2\in \mathbb{S}^2$ such that $R_{\Pi}(p_1)=p_2$ and $p_2 \neq -p_1$. Using (\ref{lemm: prop of a reflection: identical}), $\Pi$ should be a coordinate plane passing through the midpoint of $p_1$ and $p_2$, which completes the proof.
\end{proof}

\subsection{Properties for genus zero}
In the proof of the following lemmas, we use the fact that $\Sigma$ has genus zero.

\begin{lemma} $\Sigma$ satisfies the followings.
\label{lemm: properties of interior}
\begin{enumerate}
    \item (transversality) If a coordinate plane meets $\Sigma$, it intersects transversely. In particular, $o\notin \Sigma$, where $o$ is the origin. \label{lemm: properties of interior: transversality}
    \item (radial graphicality) If a ray from the origin meets $\Sigma$, it intersects at a single point transversely. \label{lemm: properties of interior: radial graphicality}
\end{enumerate}
\end{lemma}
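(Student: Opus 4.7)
The plan is to argue both parts by contradiction, using the harmonicity of coordinate functions on the minimal surface $\Sigma$ together with the two-piece property (Theorem~\ref{thm: two-piece property}) and the genus-zero topology.

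For part (1), rotate coordinates so that the given coordinate plane is $\Pi = \{x_3 = 0\}$ and consider $u := x_3\vert_\Sigma$. Since $\Sigma$ is minimal, $u$ is harmonic on $\Sigma$. Suppose toward contradiction that $\Pi$ is tangent to $\Sigma$ at an interior point $p$. Then $u(p) = 0$ and $\nabla_\Sigma u(p) = 0$, so by the standard local picture of a harmonic function (in isothermal coordinates, $u$ looks like $\operatorname{Re}(z^k)$), $u$ vanishes at $p$ to some order $k \geq 2$, and the nodal set $\{u = 0\} \cap \Sigma$ near $p$ consists of $2k \geq 4$ analytic arcs meeting at $p$, cutting a neighborhood of $p$ into $2k$ sectors alternating in the sign of $u$. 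The two-piece property forces $\{u > 0\}$ and $\{u < 0\}$ each to be connected, so $\Sigma$ has exactly two global nodal domains for $u$. I would then carry out an Euler-characteristic count on the nodal graph inside the planar domain $\Sigma$ (a sphere with $n$ disks removed), combining vertex-edge-face counts for the graph itself and each closed nodal domain; this should force a vertex of degree $\geq 4$ to create at least three complementary components, contradicting the two-piece property. The ``in particular'' assertion that $o \notin \Sigma$ follows immediately: if $o \in \Sigma$, then $T_o \Sigma$ would be a plane through the origin, hence a coordinate plane tangent to $\Sigma$ at $o$, violating the transversality just proven.

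For part (2), transversality at every interior intersection is a direct corollary of (1). Indeed, if a ray from $o$ were tangent to $\Sigma$ at an interior point $p$, then the radial direction $p/|p|$ would lie in the linear span of $T_p \Sigma$, so the affine tangent plane at $p$ would pass through $o$ and hence be a coordinate plane tangent to $\Sigma$ at $p$, contradicting (1). To show that each ray meets $\Sigma$ in a single point, I would consider the radial projection $\pi : \Sigma \to S^2$, $\pi(x) = x/|x|$, well-defined thanks to $o \notin \Sigma$. The transversality just proven makes $\pi$ a local diffeomorphism on $\operatorname{int}\Sigma$, while $\pi\vert_{\partial \Sigma}$ is the identity inclusion $\partial\Sigma \hookrightarrow S^2$. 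A degree / covering-count argument propagated inward from $\partial \Sigma$, using compactness of $\Sigma$ and the genus-zero topology (which presents $\Sigma$ as a planar domain bounded by $n$ circles in $S^2$), then forces $\pi$ to be globally injective, which is exactly the single-intersection statement.

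The hardest step is the Euler-characteristic bookkeeping in (1), where the genus-zero hypothesis is used essentially to conclude that an interior vertex of degree $\geq 4$ in the nodal graph is incompatible with only two complementary components on a planar surface with boundary; this requires careful accounting of interior singular vertices, boundary vertices of the nodal set on $\partial \Sigma$, edges, and the Euler characteristics of the two closed nodal domains. A related subtlety in (2) is that $\pi$ fails to be an immersion up to $\partial \Sigma$ (the radial direction is tangent to $\Sigma$ at each boundary point by the free-boundary perpendicularity), so the degree argument must be carried out on interior approximations $\{|x| \leq 1 - \varepsilon\} \cap \Sigma$ before taking a limit.
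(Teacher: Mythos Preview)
The paper does not prove this lemma at all; it simply cites \cite[Proposition~8.1]{FS16SEB}, with pointers to \cite[Corollary~4.5]{Seo2021SSC} for (\ref{lemm: properties of interior: transversality}) and \cite[Lemma~2.1]{MZ2024OAG} for (\ref{lemm: properties of interior: radial graphicality}). Your outline is essentially what one finds in those references: the nodal-set/local-model argument for harmonic $x_3$ combined with the two-piece property for part~(\ref{lemm: properties of interior: transversality}), and the radial projection $\pi(x)=x/|x|$ for part~(\ref{lemm: properties of interior: radial graphicality}). So at the level of strategy there is nothing to compare---you have reconstructed the standard route.

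Two comments on execution. First, the Euler-characteristic count you defer in (\ref{lemm: properties of interior: transversality}) is genuinely where genus zero is used, and it does require care: one must account for boundary vertices of the nodal graph (points of $\{x_3=0\}\cap\partial\Sigma$) as well as interior singular vertices, and check that a single interior vertex of valence $\ge 4$ already forces at least three complementary faces on a planar surface. You flagged this correctly as the hard step; just be aware that the bookkeeping is not entirely trivial. Second, for (\ref{lemm: properties of interior: radial graphicality}) your reduction of ray-transversality to (\ref{lemm: properties of interior: transversality}) is clean and correct. The covering/degree argument for injectivity of $\pi$ is the right idea, and your observation that $\pi$ degenerates along $\partial\Sigma$ (since the conormal is radial there) is exactly the subtlety that forces one to work on interior approximations; the genus-zero hypothesis enters because it lets you cap off $\Sigma$ to a sphere and read off that the resulting map $S^2\to S^2$ has degree one.
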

\begin{proof}
    Every statement can be found in the proof of \cite[Proposition 8.1]{FS16SEB}. For detailed proof, see \cite[Corollary 4.5]{Seo2021SSC} for (\ref{lemm: properties of interior: transversality}) and \cite[Lemma 2.1]{MZ2024OAG} for (\ref{lemm: properties of interior: radial graphicality}). 
\end{proof}

We denote by
\begin{align*}
    (\partial \Sigma)_1, \dots, (\partial \Sigma)_n,
\end{align*}
the components of $\partial \Sigma$.

Let $x$ be the position vector in $\mathbb{R}^3$. 
For $i=1,\dots, n$, let 
\begin{align}\label{eqn: F_i, f_i}
    F_i:= \int_{(\partial \Sigma)_i} x_i, \quad \text{and} \quad f_i:=\frac{\int_{(\partial \Sigma)_i} x_i}{\left|\int_{(\partial \Sigma)_i} x_i\right|}\in \mathbb{S}^2.
\end{align}

Using Lemma \ref{lemm: perpendicular to constant}, we have 
\begin{align*}
    \mathbf{0}=\int_{\partial \Sigma}x =\int_{\partial \Sigma}(x_1, x_2,x_3).
\end{align*}
Thus we have
\begin{align}\label{eqn: flux}
    F_1+\dots F_n=0.
\end{align}

\begin{lemma}\label{lemm: properties of the boundary}Assume $n\ge 2$.
Each $(\partial \Sigma)_i, i=1,\dots ,n$, satisfies the followings.
\begin{enumerate}
    \item \label{lemm: properties of the boundary: strictly convex} $(\partial \Sigma)_i$ is a strictly convex curve in $\mathbb{S}^2$. In other words, a plane passing through the origin intersects $(\partial \Sigma)_i$ in at most two points.
    \item \label{lemm: properties of the boundary: small neighborhood} There is a small neighborhood of $(\partial \Sigma)_i$ in $\Sigma$ that is contained outside of the convex cone over $(\partial \Sigma)_i$. 
    \item \label{lemm: properties of the boundary: intersection with a plane} A plane parallel to the line passing through $f_i$ and the origin intersects $(\partial \Sigma)_i$ in at most two points.
    \item \label{lemm: properties of the boundary: nodal points} Let $u$ be a first Steklov eigenfunction of $\Sigma$. Then, $(\partial \Sigma)_i$ contains at most two nodal points of $u$.
    
\end{enumerate}
\end{lemma}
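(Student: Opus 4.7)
The plan is to prove the four claims in the given order, using earlier parts to prepare later ones. For (\ref{lemm: properties of the boundary: strictly convex}), I would combine the two-piece property (Theorem~\ref{thm: two-piece property}) with the transversality of Lemma~\ref{lemm: properties of interior}(\ref{lemm: properties of interior: transversality}) and a genus-zero Euler characteristic count. A coordinate plane $\Pi$ meets $\Sigma$ transversely, so $\Pi \cap \Sigma$ is a properly embedded $1$-manifold cutting $\Sigma$ into exactly two planar (genus zero) pieces $\Sigma^{\pm}$. The relation $\chi(\Sigma^+) + \chi(\Sigma^-) - A = 2 - n$, where $A$ is the number of arcs in $\Pi\cap\Sigma$, combined with an analysis of how arcs of $(\partial\Sigma)_i$ are joined by arcs of $\Pi\cap\Sigma$ along the planar boundaries of $\Sigma^{\pm}$, should force $|\Pi \cap (\partial\Sigma)_i|\le 2$ for every $i$.

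For (\ref{lemm: properties of the boundary: small neighborhood}), I would use the free boundary condition together with minimality. At $p \in (\partial\Sigma)_i$, perpendicularity to $\mathbb{S}^2$ makes the inward conormal of $\Sigma$ equal to $-p$, so $T_p\Sigma$ coincides with the tangent plane at $p$ of the cone $\{tq : t\in[0,1],\,q\in(\partial\Sigma)_i\}$. A second-order comparison, using that this cone has nonzero mean curvature pointing inward (toward its axis) while $H_\Sigma\equiv 0$, forces $\Sigma$ to sit on the outward side of the cone in a neighborhood of $p$.

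For (\ref{lemm: properties of the boundary: intersection with a plane}), I would first establish that $(\partial\Sigma)_i$ lies in the open hemisphere $\{\langle x, f_i\rangle > 0\}$. The identity $\int_{(\partial\Sigma)_i} \langle x, f_i\rangle\,ds = |F_i| > 0$ gives positive mean; (\ref{lemm: properties of the boundary: strictly convex}) limits the zeros of $\langle\cdot, f_i\rangle$ on $(\partial\Sigma)_i$ to at most two; combining this with the bulging-outward from (\ref{lemm: properties of the boundary: small neighborhood}) and the defining property of $f_i$ should rule out the sign-changing case, leaving $(\partial\Sigma)_i$ entirely on the positive side. Once hemispheric containment holds, radial projection onto the affine plane $\{\langle x, f_i\rangle = 1\}$ is a homeomorphism sending $(\partial\Sigma)_i$ to a strictly convex planar curve, since lines in this affine plane correspond bijectively to coordinate planes through the origin, where (\ref{lemm: properties of the boundary: strictly convex}) applies directly. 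A plane parallel to $\operatorname{span}(f_i)$ meets $\{\langle x, f_i\rangle = 1\}$ in a line, giving at most two preimages on $(\partial\Sigma)_i$.

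For (\ref{lemm: properties of the boundary: nodal points}), I would invoke the Steklov analogue of Courant's nodal domain theorem, namely that a first nonzero eigenfunction $u$ has at most two nodal domains, and then adapt the topological argument used in (\ref{lemm: properties of the boundary: strictly convex}). The nodal set $\{u=0\}$ is a properly embedded $1$-manifold splitting $\Sigma$ into two genus-zero pieces, so an analogous Euler characteristic count, sharpened by the planar-convexity information from (\ref{lemm: properties of the boundary: intersection with a plane}) applied to the arcs where $u$ changes sign, should yield the per-component bound of at most two nodal points. The hardest step, I expect, is (\ref{lemm: properties of the boundary: intersection with a plane}), and within it the hemispheric containment: converting the weak first-moment inequality $\int \langle x, f_i\rangle\,ds > 0$ into the pointwise statement $\langle x, f_i\rangle > 0$ on $(\partial\Sigma)_i$ requires combining (\ref{lemm: properties of the boundary: strictly convex}), (\ref{lemm: properties of the boundary: small neighborhood}), and the definition of $f_i$ in a careful way, and this is where the proof will need the most attention.
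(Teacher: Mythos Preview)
The paper's own proof of this lemma is entirely by citation to \cite{Seo2021SSC} and \cite{KM20arXiv}, so there is no in-paper argument to match against line by line. Your strategies for (\ref{lemm: properties of the boundary: strictly convex})--(\ref{lemm: properties of the boundary: intersection with a plane}) are reasonable and consistent with the tools the paper names: the two-piece property plus a genus-zero cut-and-count is exactly the ``major tool'' flagged for (\ref{lemm: properties of the boundary: strictly convex}); the tangent-plane coincidence and mean-curvature comparison is the natural mechanism for (\ref{lemm: properties of the boundary: small neighborhood}); and hemisphere containment followed by radial projection to the affine plane $\{\langle x,f_i\rangle=1\}$ is the standard way to convert spherical strict convexity into (\ref{lemm: properties of the boundary: intersection with a plane}). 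Your identification of the pointwise positivity $\langle x,f_i\rangle>0$ on $(\partial\Sigma)_i$ as the crux is accurate.

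Your plan for (\ref{lemm: properties of the boundary: nodal points}), however, has a real gap. Two issues. First, the nodal set of a harmonic function need not be an embedded $1$-manifold: interior critical zeros produce equiangular crossings, so the Euler-characteristic bookkeeping must either exclude these or account for the extra vertices. Second, and more substantively, Courant's bound of two nodal domains together with a global arc count does \emph{not} by itself yield a per-component bound of two nodal points. Four nodal points on a single $(\partial\Sigma)_i$, with the two positive boundary arcs joined through the interior of $\{u>0\}$ and the two negative arcs joined through $\{u<0\}$, is not excluded by connectivity of the nodal domains alone on a planar surface with several boundary components. You invoke (\ref{lemm: properties of the boundary: intersection with a plane}) to ``sharpen'' the count, but that item concerns planar slices parallel to $\mathrm{span}(f_i)$, not level sets of $u$; it is not clear what the affine convexity of the projected boundary curve contributes to bounding the zeros of an arbitrary first eigenfunction. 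This step needs a concrete mechanism beyond what you have written.
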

\begin{proof}
    For the proof of (\ref{lemm: properties of the boundary: strictly convex}), see \cite[Lemma 4.2]{Seo2021SSC} or \cite[Corollary 4.2]{KM20arXiv}. The major tool of the proof is the two-piece property (Theorem \ref{thm: two-piece property}). For (\ref{lemm: properties of the boundary: small neighborhood}), see \cite[Lemma 6.4]{Seo2021SSC}. (\ref{lemm: properties of the boundary: intersection with a plane}) and (\ref{lemm: properties of the boundary: nodal points}) follow from \cite[Proposition 3.6]{Seo2021SSC} and \cite[Remark 4.4]{Seo2021SSC}, respectively.
\end{proof}

\subsection{Properties under $\sigma_1(M)<1$}

\begin{lemma}\label{lemm: perpendicular to constant}
\begin{enumerate}
    \item Any coordinate functions $x_i, i=1,2,3$, are Steklov eigenfunctions of $M$ with eigenvalue 1. In particular, $\sigma_1(M)\le 1$. \label{lemm: perpendicular to constant: coordinate function}
    \item Let $u$ be a first Steklov eigenfunction of $M$. Then,
    \begin{align*}
        \int_{\partial M} u =0.
    \end{align*}
    If $\sigma_1(M)<1$, we have 
    \begin{align*}
        \int_{\partial M} ux_i=0.
    \end{align*} \label{lemm: perpendicular to constant: first eigenfunction}
\end{enumerate}
\end{lemma}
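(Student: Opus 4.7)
For part (1), the plan is to verify directly that each coordinate function $x_i$ solves the Steklov system \eqref{problem} with $\sigma=1$. Since $M$ is minimal, the identity $\Delta_M x = \vec{H} = \mathbf{0}$ gives $\Delta_M x_i = 0$ in $M$. Along $\partial M$, the free boundary condition says that the outward conormal $\eta$ of $M$ coincides with the outward unit normal of $\partial \mathbb{B}^3$, which is the position vector $x$ itself; hence $\tfrac{\partial x_i}{\partial \eta} = \eta \cdot \nabla_{\mathbb{R}^3} x_i = \eta_i = x_i$. Applying Stokes' theorem to $\Delta_M x_i = 0$ yields $\int_{\partial M} x_i = 0$, so each $x_i$ is orthogonal to the zeroth eigenspace of constants. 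Therefore $1$ lies in the Steklov spectrum of $M$ and corresponds to an eigenfunction orthogonal to constants, which forces $\sigma_1(M) \le 1$.

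For part (2), the idea is to apply Green's identity to $u$ paired successively with $1$ and with $x_i$. Pairing with $1$: since $\Delta_M u = 0$ and $\tfrac{\partial u}{\partial \eta} = \sigma_1(M) u$, integration by parts on $M$ gives
\begin{align*}
0 = \int_M \Delta_M u \, dA = \int_{\partial M} \frac{\partial u}{\partial \eta} \, ds = \sigma_1(M)\int_{\partial M} u \, ds.
\end{align*}
As $\sigma_1(M)>0$, we conclude $\int_{\partial M} u = 0$. Pairing with $x_i$: the symmetric Green identity together with part (1) yields
\begin{align*}
0 = \int_M \bigl( u \, \Delta_M x_i - x_i \, \Delta_M u \bigr) \, dA = \int_{\partial M} \left( u \, \frac{\partial x_i}{\partial \eta} - x_i \, \frac{\partial u}{\partial \eta} \right) ds = \bigl(1 - \sigma_1(M)\bigr)\int_{\partial M} u x_i \, ds.
\end{align*}
Under the hypothesis $\sigma_1(M) < 1$, the factor $1 - \sigma_1(M)$ is nonzero, and we may cancel it to obtain $\int_{\partial M} u x_i = 0$, as desired.

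There is no substantial obstacle here: the only subtlety is ensuring that the outward conormal of $M$ along $\partial M$ is genuinely the restriction of the Euclidean position vector, which is precisely the free boundary condition, and keeping careful track of the sign/placement of $\sigma_1(M)$ when applying Green's identity. Everything else is routine integration by parts on a compact surface with smooth boundary.
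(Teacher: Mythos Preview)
Your proof is correct and is exactly the standard argument for this well-known fact; the paper itself does not spell out a proof but simply refers to \cite[Lemma 2.1]{Seo2021SSC}, whose content your argument reproduces. There is nothing to add.
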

\begin{proof}
    See, for example, \cite[Lemma 2.1]{Seo2021SSC}.
\end{proof}

Note that Lemma \ref{lemm: perpendicular to constant} holds for all compact immersed free boundary minimal submanfolds in $\mathbb{B}^m$ with all coordinate functions $x_1, \dots ,x_m$.

\begin{lemma}[symmetry of a first eigenfunction, {\cite[Lemma 3.2]{McG18ACC}}]\label{lemm: symmetry of a first eigenfunction}
    Assume $\sigma_1(M)<1$.
    Let $u$ be a first Steklov eigenfunction of $M$. If $R_{\Pi}(M)=M$, $u$ is invariant under $R_{\Pi}$.
\end{lemma}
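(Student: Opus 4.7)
The strategy is to split a first Steklov eigenfunction $u$ into its $R_\Pi$-symmetric and antisymmetric components and show the antisymmetric component must vanish. Define
\begin{align*}
u^+ := \tfrac{1}{2}\bigl(u + u\circ R_\Pi\bigr), \qquad u^- := \tfrac{1}{2}\bigl(u - u\circ R_\Pi\bigr).
\end{align*}
Because $R_\Pi$ is an isometry of $\mathbb{R}^3$ that preserves $M$ setwise and $\partial M$ setwise, precomposition with $R_\Pi$ commutes both with $\Delta_M$ and with the conormal derivative along $\partial M$. Hence $u\circ R_\Pi$, and therefore each of $u^{\pm}$, is itself a Steklov eigenfunction with eigenvalue $\sigma_1(M)$. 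The desired conclusion is equivalent to $u^-\equiv 0$.

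Suppose for contradiction $u^-\not\equiv 0$. By Lemma \ref{lemm: prop of a reflection}(\ref{lemm: prop of a reflection: origin}) the plane $\Pi$ passes through the origin, so after rotating coordinates I may assume $\Pi=\{x_3=0\}$. The hypothesis $\sigma_1(M)<1$ rules out the equatorial disk (for which the coordinate functions already saturate $\sigma_1=1$), so Theorem \ref{thm: two-piece property} applies and $\Pi$ dissects $M$ into exactly two components; set $M^+ := M\cap\{x_3\ge 0\}$ and decompose its boundary as $\Gamma_1 := \partial M\cap M^+$ and $\Gamma_2 := M\cap \Pi$. Both $u^-|_{M^+}$ and $x_3|_{M^+}$ are harmonic on $M^+$; both vanish on $\Gamma_2$ ($u^-$ by antisymmetry, $x_3$ by the choice of $\Pi$); and both satisfy $\partial v/\partial\eta = \lambda v$ on $\Gamma_1$ with $\lambda=\sigma_1(M)$ and $\lambda=1$ respectively, the latter by Lemma \ref{lemm: perpendicular to constant}(\ref{lemm: perpendicular to constant: coordinate function}). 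Thus both are eigenfunctions of the same mixed Steklov--Dirichlet problem on $M^+$.

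The crucial point is that $x_3$ is strictly positive on the interior of $M^+$, since the zero set of $x_3$ on $M$ is exactly $M\cap\Pi=\Gamma_2$. By standard elliptic theory for self-adjoint mixed eigenvalue problems---variational characterization of the first eigenvalue combined with the strong maximum principle---the first eigenvalue of this mixed problem is simple and is attained only by sign-definite eigenfunctions; conversely, any sign-definite eigenfunction realizes the first eigenvalue. Applied to $x_3|_{M^+}$, this forces the first mixed eigenvalue to equal $1$. But $u^-|_{M^+}$ is then a nontrivial mixed eigenfunction with eigenvalue $\sigma_1(M)<1$, contradicting the minimality of the first eigenvalue. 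Therefore $u^-\equiv 0$, so $u=u\circ R_\Pi$.

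The main technical obstacle is the careful treatment of the mixed Steklov--Dirichlet problem on $M^+$, which has corners along $\Gamma_1\cap\Gamma_2 = \partial M\cap\Pi$: one needs the variational minimum to be attained in an appropriate $H^1$ space and the first eigenfunction to be strictly positive in the interior. These properties are classical but not entirely automatic in the presence of corners; an equivalent corner-free formulation is to note that antisymmetric Steklov eigenfunctions on $M$ correspond bijectively (via odd reflection across $\Pi$) to mixed eigenfunctions on $M^+$, so the argument can equivalently be phrased entirely on the smooth manifold $M$.
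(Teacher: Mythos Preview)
The paper does not supply its own proof of this lemma; it simply cites McGrath \cite[Lemma 3.2]{McG18ACC}. Your argument is precisely McGrath's: split $u$ into $R_\Pi$-even and $R_\Pi$-odd parts, note that a nonzero odd part would be a mixed Steklov--Dirichlet eigenfunction on $M^+$ with eigenvalue $\sigma_1(M)<1$, and derive a contradiction from the fact that the coordinate $x_3$ is a strictly positive mixed eigenfunction with eigenvalue $1$ (hence realizes the smallest mixed eigenvalue). Your handling of the corner issue via odd reflection back to the smooth manifold $M$ is the standard way to make this rigorous, and your use of the two-piece property to secure connectedness of $M^+$ (after excluding the equatorial disk, where $\sigma_1=1$) is correct.
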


\begin{lemma}[constant sign]\label{lemm: constant sign} Assume $\sigma_1(\Sigma)<1$. If a boundary component $(\partial \Sigma)_i$ is invariant under the reflections through the two distinct planes, a first Steklov eigenfunction of $\Sigma$ has a constant sign on $(\partial \Sigma)_i$.
\end{lemma}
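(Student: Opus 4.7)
The plan is to argue by contradiction: suppose the first Steklov eigenfunction $u$ attains both strictly positive and strictly negative values on $\gamma := (\partial \Sigma)_i$. Writing $Z := \{q \in \gamma : u(q) = 0\}$, continuity forces $Z \neq \emptyset$ and Lemma \ref{lemm: properties of the boundary}(\ref{lemm: properties of the boundary: nodal points}) gives $|Z| \le 2$. A parity-of-sign-changes argument on the closed curve $\gamma$ rules out $|Z| = 0$ and $|Z| = 1$ (in either case $u$ would have constant sign), leaving $|Z| = 2$ with $\gamma \setminus Z$ consisting of two open arcs $\gamma_1, \gamma_2$ on which $u$ has strictly opposite signs.

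Next I study how the two symmetry planes act on $Z = \{p_1, p_2\}$. By Lemma \ref{lemm: symmetry of a first eigenfunction}, $u \circ R_{\Pi_j} = u$ for $j = 1, 2$, so each $R_{\Pi_j}$ preserves $Z$ and therefore either fixes both $p_1, p_2$ (forcing $p_1, p_2 \in \Pi_j$) or swaps them (forcing $\Pi_j$ to be the perpendicular bisector of the segment $\overline{p_1 p_2}$). If both reflections swapped the pair, both planes would coincide with the unique perpendicular bisector, contradicting $\Pi_1 \neq \Pi_2$. Hence at least one reflection, call it $R_{\Pi_j}$, fixes $p_1$ and $p_2$ pointwise.

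The contradiction then comes from the arc structure. From $\{p_1, p_2\} \subset \gamma \cap \Pi_j$ and Lemma \ref{lemm: properties of the boundary}(\ref{lemm: properties of the boundary: strictly convex}) we obtain $\gamma \cap \Pi_j = \{p_1, p_2\}$ exactly, so each open arc $\gamma_k$ is connected and disjoint from $\Pi_j$, and hence lies in a single open half-space cut out by $\Pi_j$. The two arcs cannot lie in the same half-space: otherwise $\gamma$ would sit in one closed half-space while $R_{\Pi_j}(\gamma)$ would sit in the opposite one, contradicting $R_{\Pi_j}(\gamma) = \gamma$. Thus $R_{\Pi_j}$ bijects $\gamma_1$ onto $\gamma_2$, and the identity $u \circ R_{\Pi_j} = u$ forces $u|_{\gamma_1}$ and $u|_{\gamma_2}$ to take the same sign, contradicting the opposite-sign conclusion from the first step.

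The main obstacle I anticipate is the geometric verification that $\gamma_1, \gamma_2$ truly lie in opposite half-spaces and are genuinely exchanged by $R_{\Pi_j}$ (ruling out, in particular, any tangential contact of $\gamma$ with $\Pi_j$ at $p_1$ or $p_2$); once that is settled, the remainder reduces to the short linear-algebraic observation that two distinct reflections cannot share the same perpendicular bisector of a segment.
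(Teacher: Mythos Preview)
Your nodal-set argument is sound once you know that $\Pi_1,\Pi_2$ are symmetry planes of the whole surface $\Sigma$, but the lemma's hypothesis only asserts that the single boundary component $(\partial\Sigma)_i$ is invariant under $R_{\Pi_1},R_{\Pi_2}$; it does not give $R_{\Pi_j}(\Sigma)=\Sigma$. You invoke Lemma~\ref{lemm: symmetry of a first eigenfunction} to obtain $u\circ R_{\Pi_j}=u$, and Lemma~\ref{lemm: properties of the boundary}(\ref{lemm: properties of the boundary: strictly convex}) to conclude $\gamma\cap\Pi_j=\{p_1,p_2\}$; the first requires $R_{\Pi_j}(\Sigma)=\Sigma$, and the second requires that $\Pi_j$ pass through the origin, which in turn relies on Lemma~\ref{lemm: prop of a reflection}(\ref{lemm: prop of a reflection: origin}) and hence again on $\Pi_j$ being a reflection plane of $\Sigma$. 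The paper closes this gap at the outset by appealing to a symmetry (unique-continuation) principle from \cite[Theorem~1.4]{Seo2021SSC}, which promotes a reflection of one boundary component to a reflection of all of $\Sigma$; this upgrade is genuinely needed, for instance when the lemma is applied in Step~2 of the main proof, where the second reflection arises purely as a congruence of the curve $(\partial\Sigma)_i$.

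With that gap filled, your argument is correct and in fact more explicit than the paper's, which merely records that both planes contain the origin and $f_i$ and then defers the entire nodal analysis to ``Case~1 in the proof of \cite[Theorem~1.2]{Seo2021SSC}''. Your route---using Lemma~\ref{lemm: properties of the boundary}(\ref{lemm: properties of the boundary: nodal points}) to force exactly two nodal points, observing via Lemma~\ref{lemm: prop of a reflection}(\ref{lemm: prop of a reflection: identical}) that at most one of the two reflections can swap them, and showing that the reflection fixing $p_1,p_2$ must exchange the two sign arcs---is self-contained and never touches the flux point $f_i$. Your worry about tangential contact is unfounded: once $\gamma\cap\Pi_j=\{p_1,p_2\}$, connectedness places each arc in a single open half-space, and $R_{\Pi_j}(\gamma)=\gamma$ together with $\gamma\not\subset\Pi_j$ forces the two arcs into opposite half-spaces regardless of transversality.
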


\begin{proof}
    Let $\Pi_1$ and $\Pi_2$ be two distinct reflection planes for $(\partial \Sigma)_i$.  By symmetry principle \cite[Theorem 1.4]{Seo2021SSC} and Lemma \ref{lemm: prop of a reflection} (\ref{lemm: prop of a reflection: origin}), $\Pi_1$ and $\Pi_2$ are two reflection planes for $\Sigma$ passing through the origin. Since $R_{\Pi_1}(f_i)=R_{\Pi_2}(f_i)=f_i$, they also pass through $f_i$. By a similar argument in Case 1 in the proof of \cite[Theorem 1.2]{Seo2021SSC}, the first eigenfunction has a constant sign on $(\partial \Sigma)_i$.
\end{proof}

\section{Main proof}
   Let $D_i, i=1,\dots ,n$ be the convex sets in $\mathbb{S}^2$ bounded by $(\partial \Sigma)_i$, respectively. Note that each of $D_i, i=1,\dots, n$ is well-defined by Lemma \ref{lemm: properties of the boundary} (\ref{lemm: properties of the boundary: strictly convex}). The following theorem insists that no two nested convex sets exist among $D_i$.
\begin{theorem} \label{thm: not nested}
    There are no two distinct $i,j \in \{1,\dots, n\}$ such that $D_i\subsetneq D_j$ or $D_j\subsetneq D_i$.
\end{theorem}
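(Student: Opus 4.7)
The plan is to push $\Sigma$ down to $\mathbb{S}^2$ via the radial projection and to show that the resulting planar image avoids the interior of every $D_j$; non-nestedness will then be immediate.

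First I would define $\pi\colon \Sigma \to \mathbb{S}^2$ by $\pi(x) = x/|x|$. By Lemma~\ref{lemm: properties of interior}~(\ref{lemm: properties of interior: transversality}) the origin does not lie on $\Sigma$, so $\pi$ is well-defined, and by Lemma~\ref{lemm: properties of interior}~(\ref{lemm: properties of interior: radial graphicality}) $\pi$ is injective. Since $\Sigma$ is compact, $\pi$ is a homeomorphism onto $\Omega := \pi(\Sigma)$. In particular $\Omega$ is connected, and every boundary curve $(\partial\Sigma)_i\subset\mathbb{S}^2$ lies in $\Omega$, because $\pi$ fixes points of $\mathbb{S}^2$.

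The heart of the argument is the claim that $\Omega \cap D_j = \emptyset$ for each $j$. Writing $E_j := \mathbb{S}^2\setminus\overline{D_j}$, I would set $A := \Omega\cap\overline{E_j}$ and prove that $A$ is both open and closed in $\Omega$; since $A$ contains $(\partial\Sigma)_j$ and $\Omega$ is connected, this forces $A=\Omega$. Closedness is automatic. For openness at a point $p\in \Omega\cap E_j$ the open set $E_j$ itself furnishes a neighborhood. The crucial case is $p\in(\partial\Sigma)_j$: here I would invoke Lemma~\ref{lemm: properties of the boundary}~(\ref{lemm: properties of the boundary: small neighborhood}) to pick a neighborhood $V$ of $(\partial\Sigma)_j$ in $\Sigma$ with $V\setminus(\partial\Sigma)_j$ lying outside the convex cone over $(\partial\Sigma)_j$. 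Radially, this translates to $\pi(V)\subset\overline{E_j}$, and because $\pi$ is a homeomorphism $\pi(V)$ is an open neighborhood of $(\partial\Sigma)_j$ in $\Omega$, in particular contained in $A$.

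With the claim in hand, assume $D_i\subsetneq D_j$ for a contradiction. Taking closures gives $(\partial\Sigma)_i\subset\overline{D_i}\subset\overline{D_j}=D_j\cup(\partial\Sigma)_j$, and since distinct boundary components of $\Sigma$ are disjoint, $(\partial\Sigma)_i\subset D_j$. But $(\partial\Sigma)_i\subset\Omega$, contradicting $\Omega\cap D_j=\emptyset$. The main technical point I expect is the openness of $A$ at points of $(\partial\Sigma)_j$: Lemma~\ref{lemm: properties of the boundary}~(\ref{lemm: properties of the boundary: small neighborhood}) is stated as a geometric condition in $\mathbb{R}^3$, and one must convert it to the $\mathbb{S}^2$-statement ``$\Omega$ near $(\partial\Sigma)_j$ lies in $\overline{E_j}$'' by using that the radial projection is a homeomorphism from $\Sigma$ onto $\Omega$. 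Once that step is carried out, the remainder is a brief connectedness argument.
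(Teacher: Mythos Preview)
Your argument is correct and takes a genuinely different route from the paper's.  The paper assumes $D_1\subset D_2$, slices $\Sigma$ by a coordinate plane $\Pi$ meeting both $(\partial\Sigma)_1$ and $(\partial\Sigma)_2$, and studies the one-dimensional picture $\Pi\cap\Sigma$: the intersection points form the vertices of a graph whose edges are the arcs of $\Pi\cap\Sigma$, the two-piece property together with the genus-zero hypothesis is used to orient all edges coherently, Lemma~\ref{lemm: properties of the boundary}~(\ref{lemm: properties of the boundary: small neighborhood}) is applied at each vertex to force the directed path starting at $v_1^2$ to leave the cone over $D_2\cap\Pi$ through an edge, and this edge then meets a radial segment $ov_2^k$ transversely, contradicting radial graphicality.

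Your approach replaces this combinatorial trace by a single global observation: radial graphicality makes $\pi=x/|x|$ a homeomorphism $\Sigma\to\Omega\subset\mathbb{S}^2$, and the small-neighborhood lemma translates into ``$\Omega$ near $(\partial\Sigma)_j$ lies on the $E_j$-side,'' which upgrades via connectedness to $\Omega\cap D_j=\emptyset$.  Non-nestedness is then immediate.  This is cleaner: you use only Lemma~\ref{lemm: properties of interior} and Lemma~\ref{lemm: properties of the boundary}~(\ref{lemm: properties of the boundary: small neighborhood}), never invoking the two-piece property or the explicit graph argument, and as a byproduct you obtain the stronger statement that $\pi(\Sigma)=\mathbb{S}^2\setminus\bigcup_i D_i$.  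The paper's approach, on the other hand, is more hands-on and stays within the plane $\Pi$, which may be closer in spirit to the other slicing arguments used later.  The one point to write out carefully is exactly the one you flag: Lemma~\ref{lemm: properties of the boundary}~(\ref{lemm: properties of the boundary: small neighborhood}) literally places $V\setminus(\partial\Sigma)_j$ outside the solid cone over $\overline{D_j}$, so $\pi(V\setminus(\partial\Sigma)_j)\subset E_j$ and hence $\pi(V)\subset\overline{E_j}$; since $\pi$ is a homeomorphism and $V$ is open in $\Sigma$, $\pi(V)$ is the required open neighborhood in $\Omega$.
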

\begin{proof}
    Assume $D_1 \subset D_2$. Take a coordinate plane $\Pi$ that meets $\partial \Sigma$ transversely and meets $(\partial\Sigma)_1$ and $(\partial \Sigma)_2$. Let $\{(\partial \Sigma)_1,\dots (\partial \Sigma)_m\}$ be the boundary components of $\Sigma$ that has an intersection with $\Pi$, and we denote $(\partial\Sigma)_i\cap \Pi$ by $v_i^1$ and $v_i^2$ for all $i=1,2,\dots ,m$.

     We consider a graph whose vertex set is 
     \begin{align}
         \{v_1^1,\dots, v_m^1\} \cup \{v_1^2,\dots, v_m^2\}
     \end{align}
     and the edges are the arcs in $\Pi\cap \Sigma$ whose endpoints are contained in $\partial \Sigma$. By the transversality property (Lemma \ref{lemm: properties of interior} (\ref{lemm: properties of interior: transversality})), every vertex is contained in exactly one edge. Furthermore, we give a direction on the edge set by following the procedure. 
     \begin{enumerate}
         \item Take the edge containing $v_1^2$ and give a direction from $v_1^2$. Let $v$ be the other vertex in the edge.
         \item Take the vertex $w$ which is contained in the same boundary component with $v$ but not identical to $v$.
         \item Repeat the first and second steps until the vertex $v_1^1$ comes out.
     \end{enumerate}
    By the two-piece property (Theorem \ref{thm: two-piece property}) with the fact that $\Sigma$ has genus zero, every edge has a direction. 

    We claim that the directed graph from $v_1^2$ escapes the convex cone over $D_2\cap \Pi$ by an edge. Note the fact that a small part of an arc of $\Pi\cap \Sigma$ from each of vertices $v_i^1$ and $v_i^2$ is contained outside of the convex cone over $D_i\cap \Pi$ by Lemma \ref{lemm: properties of the boundary} (\ref{lemm: properties of the boundary: small neighborhood}). Since we can give directions on every edge, the directed graph from $v_1^2$ escapes the convex cone over $D_2\cap \Pi$. On the other hand, the embeddedness of $\Sigma$ implies that for every $i$, we have either $v_i^1, v_i^2\in \text{int}(D_2)$ or $v_i^1, v_i^2\notin \text{int}(D_2)$. Therefore, the directed graph escapes by an edge that meets one of the line segments $ov_2^1$ and $ov_2^2$ transversely by Lemma \ref{lemm: properties of interior} (\ref{lemm: properties of interior: transversality}), which completes that proof.

    By the previous claim and the fact in the proof, there is a ray from $o$ that meets $\Sigma$ at least twice. It contracts the radial graphicality of $\Sigma$ (Lemma \ref{lemm: properties of interior} (\ref{lemm: properties of interior: radial graphicality})).

\end{proof}

\begin{corollary} \label{cor: flux components}
   $\{f_1,\dots, f_n\}$ has $n$ distinct elements.
\end{corollary}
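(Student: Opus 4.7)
The plan is to combine two observations: first, that each $f_i$ lies in the corresponding convex region $D_i\subset \mathbb{S}^2$; and second, that the closed convex disks $D_1,\dots,D_n$ are pairwise disjoint. Together these immediately force the $f_i$ to be pairwise distinct.

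For the first observation, I would argue via the convex cone attached to $D_i$. By Lemma \ref{lemm: properties of the boundary} (\ref{lemm: properties of the boundary: strictly convex}), $(\partial\Sigma)_i$ is strictly convex in $\mathbb{S}^2$, which forces $D_i$ to lie in some open hemisphere and makes the cone $C_i:=\{tx:t\ge 0,\ x\in D_i\}$ a proper closed convex cone in $\mathbb{R}^3$ with $C_i\cap \mathbb{S}^2=D_i$. Since $(\partial\Sigma)_i\subset D_i\subset C_i$ and $C_i$ is closed under nonnegative linear combinations, approximating the integral $F_i=\int_{(\partial\Sigma)_i} x\,d\ell$ by Riemann sums shows $F_i\in C_i$. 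Choosing a unit vector $\nu_i$ whose open half-space contains $D_i$, one has $\langle x,\nu_i\rangle>0$ on $(\partial\Sigma)_i$, so $\langle F_i,\nu_i\rangle>0$; hence $F_i\ne 0$ and $f_i=F_i/|F_i|\in C_i\cap \mathbb{S}^2 = D_i$.

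For the second observation, the curves $(\partial\Sigma)_i$ and $(\partial\Sigma)_j$ are disjoint for $i\ne j$ as they are distinct boundary components of the embedded surface $\Sigma$. Two closed convex disks in $\mathbb{S}^2$ with disjoint boundary curves must be either nested (one strictly contained in the other) or disjoint, since a partial overlap would force the two convex boundary curves to cross. Theorem \ref{thm: not nested} rules out nesting, so $D_i\cap D_j=\emptyset$ whenever $i\ne j$. Combined with $f_i\in D_i$ and $f_j\in D_j$, this yields $f_i\ne f_j$.

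The main thing to verify carefully is the first observation, $f_i\in D_i$: this is where the strict convexity of $(\partial\Sigma)_i$ is essential, since it is exactly what promotes the naive vector-valued integral $F_i$ to a well-defined direction inside the intended spherical region, via the proper convex cone $C_i$. Once this is in place, the disjointness consequence of Theorem \ref{thm: not nested} closes out the argument.
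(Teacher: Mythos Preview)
Your argument is correct and follows essentially the same route as the paper: both use the embeddedness of $\Sigma$ together with Theorem~\ref{thm: not nested} to force the convex disks $D_i$ to be pairwise disjoint (ruling out nesting), and then use convexity to conclude $f_i\ne f_j$. The only cosmetic difference is that you first prove the slightly stronger intermediate fact $f_i\in D_i$ via the convex cone $C_i$, whereas the paper argues directly by separating $D_i$ and $D_j$ with a coordinate plane; your version has the advantage of recovering along the way the fact (used elsewhere in the paper via \cite[Proposition~3.5]{Seo2021SSC}) that $f_i$ lies in $D_i$.
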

\begin{proof}
     Suppose $f_i=f_j$ for $i\neq j$.
     If $D_i\cap D_j=\emptyset$, we can find a coordinate plane that separates $D_i$ and $D_j$. Then, $f_i\neq f_j$, a contradiction. On the other hand, we consider $D_i\cap D_j\neq\emptyset$. Since $\Sigma$ is embedded, we have either $D_i\subsetneq D_j$ or $D_j\subsetneq D_i$, which contradicts Theorem \ref{thm: not nested}.
\end{proof}

\begin{corollary} \label{cor: f_i and a boundary component}
    Let $\Pi$ be a reflection plane of $\Sigma$. Then, for $1\le i\le j\le n$, $R_{\Pi}((\partial \Sigma)_i)=(\partial \Sigma)_j$ if and only if $R_{\Pi}(f_i)=f_j$.
\end{corollary}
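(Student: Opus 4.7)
My plan is to prove the two implications separately. The forward direction should follow essentially from the definitions via a change of variables, while the reverse will leverage that $R_\Pi$, being a homeomorphism of $\Sigma$, must permute the boundary components; the distinctness in Corollary \ref{cor: flux components} then closes the argument.

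For the ($\Rightarrow$) direction, I would use that $R_\Pi$ is a linear isometry fixing the origin, by Lemma \ref{lemm: prop of a reflection} (\ref{lemm: prop of a reflection: origin}). Applying the change of variables $y=R_\Pi(x)$ to the vector-valued integral defining $F_i$ in (\ref{eqn: F_i, f_i}) yields $F_j=R_\Pi(F_i)$, and since $R_\Pi$ preserves norms, normalizing gives $f_j=R_\Pi(f_i)$.

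For the ($\Leftarrow$) direction, I would observe that since $R_\Pi(\Sigma)=\Sigma$ and $R_\Pi$ is a homeomorphism, it permutes the connected components of $\partial\Sigma$. Hence $R_\Pi((\partial\Sigma)_i)=(\partial\Sigma)_k$ for some $k\in\{1,\dots,n\}$. Applying the forward direction already proved gives $f_k=R_\Pi(f_i)=f_j$, and Corollary \ref{cor: flux components} then forces $k=j$, as desired.

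I do not foresee any serious obstacle: the corollary is essentially bookkeeping on top of the substantial earlier material. The one crucial input is Corollary \ref{cor: flux components}, which is precisely what permits promoting an equality of fluxes back to an equality of boundary components; without it, the reverse direction would collapse.
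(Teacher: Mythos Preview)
Your proposal is correct and follows essentially the same approach as the paper: the paper dismisses the forward direction as ``clear'' (you supply the change-of-variables detail), and for the reverse direction both arguments pick the boundary component $(\partial\Sigma)_k=R_\Pi((\partial\Sigma)_i)$, use the forward direction to get $f_k=f_j$, and invoke Corollary~\ref{cor: flux components} to conclude $k=j$.
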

\begin{proof}
    It is clear that $R_{\Pi}((\partial \Sigma)_i)=(\partial \Sigma)_j$ implies $R_{\Pi}(f_i)=f_j$. Now suppose $R_{\Pi}(f_i)=f_j$. We note that $R_{\Pi}((\partial \Sigma)_i)$ is a boundary component of $\Sigma$, say $(\partial \Sigma)_k$. Then, $f_k = R_{\Pi}(f_i)$, and by the assumption, $f_k=f_j$. By Corollary \ref{cor: flux components}, we have $k=j$ and $R_{\Pi}((\partial \Sigma)_i)=(\partial \Sigma)_j$. 
\end{proof}

\begin{lemma}\label{lemm: transitivity}  Let $\Pi_1,\dots, \Pi_n$ be distinct reflection planes of $\Sigma$. Then, the reflection group generated by $R_{\Pi_1},\dots, R_{\Pi_n}$ acts transitively on $\{f_1, \dots , f_n\}$. Equivalently, the reflection group acts transitively on $\{(\partial \Sigma)_1, \dots , (\partial \Sigma)_n\}$.
\end{lemma}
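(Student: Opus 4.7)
I argue by contradiction: suppose $G := \langle R_{\Pi_1}, \ldots, R_{\Pi_n}\rangle$ does not act transitively on $\{f_1, \ldots, f_n\}$, and let $O_1, \ldots, O_m$ (with $m \geq 2$) be its orbits. The decisive linear-algebraic observation is that each orbit sum $S_l := \sum_{i \in O_l} F_i$ is $G$-invariant: because every $R_{\Pi_j}$ is an isometry permuting the boundary components of $O_l$ and $F_i$ transforms as $R_{\Pi_j}(F_i) = F_{\sigma_j(i)}$, we obtain $R_{\Pi_j}(S_l) = S_l$ for each $j$, so $S_l \in V := \bigcap_{j=1}^n \Pi_j$. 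Since the $\Pi_j$ are $n$ distinct planes through the origin (Lemma \ref{lemm: prop of a reflection} (\ref{lemm: prop of a reflection: origin})), $V$ is either $\{0\}$ (the \emph{non-axial case}) or a single line $L$ (the \emph{axial case}).

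In the axial case, all $\Pi_j$ contain $L$ and $G$ is a dihedral group $D_k$; since $D_k$ has exactly $k$ reflections and we have $n$ distinct ones, $k \geq n$. Its orbits on $\mathbb{S}^2$ have size $1$ only at the two poles $L \cap \mathbb{S}^2$, and every non-singleton orbit has size $\geq k \geq n$. The constraints $\sum_l |O_l| = n$ together with $m \geq 2$ therefore force every orbit to be a singleton, so $\{f_1, \ldots, f_n\} \subseteq L \cap \mathbb{S}^2$, and Corollary \ref{cor: flux components} gives $n \leq 2$. The remaining substantive subcase is $n = 2$ with $f_1 = -f_2$; here I invoke the standing hypothesis $\sigma_1(\Sigma) < 1$ driving the proof-by-contradiction of Theorem \ref{thm: n symmetries}. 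Lemma \ref{lemm: constant sign} forces a first Steklov eigenfunction $u$ to be of constant sign on each $(\partial \Sigma)_i$; $\int_{\partial \Sigma} u = 0$ makes these signs opposite on the two boundaries; and since each $(\partial\Sigma)_i$ is strictly convex and hence lies in the hemisphere about $f_i$, the coordinate $x_3$ along $L$ has the same sign as $u$ on each boundary, contradicting $\int_{\partial \Sigma} u\,x_3 = 0$ from Lemma \ref{lemm: perpendicular to constant} (\ref{lemm: perpendicular to constant: first eigenfunction}).

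In the non-axial case, $S_l = 0$ for each orbit, which (since $|F_i|$ is constant within an orbit) reads $\sum_{f \in O_l} f = 0$. Moreover $G$ is a finite reflection subgroup of $O(3)$ without a common fixed axis, so by the classification it must be one of $D_{kh}$ ($k \geq 2$), $T_d$, $O_h$, or $I_h$. For each such group, the orbit sizes on $\mathbb{S}^2$ form a short prescribed list ($\{2, k, 2k, 4k\}$ for $D_{kh}$; $\{4, 6, 12, 24\}$ for $T_d$; $\{6, 8, 12, 24, 48\}$ for $O_h$; $\{12, 20, 30, 60, 120\}$ for $I_h$), and together with the constraint $n \leq$ (number of reflections of $G$), a short case check shows that no non-transitive partition of $n$ into admissible orbit sizes exists; the only consistent configuration is a single orbit of size $n$, contradicting $m \geq 2$.

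The main obstacle I anticipate is the $n = 2$ subcase of the axial case, which is the unique place where combinatorics on orbit sizes is insufficient and one is forced to invoke the first-eigenfunction machinery under $\sigma_1 < 1$. The non-axial case reduces to bookkeeping once the orbit tables are fixed, but some care is needed for the small dihedral groups $D_{kh}$ (notably $D_{2h}$, which has three distinct size-$2$ orbits on the coordinate axes) and to treat the possibility that the chosen $n$ reflections generate a proper subgroup of the full symmetry group of $\Sigma$ rather than that symmetry group itself.
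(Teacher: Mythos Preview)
Your approach is genuinely different from the paper's, and for $n\ge 3$ it works. The paper never invokes the classification of finite reflection groups or orbit-size arithmetic; instead it argues purely from the pigeonhole-type fact in Lemma~\ref{lemm: prop of a reflection}(\ref{lemm: prop of a reflection: identical}) that at most one reflection sends a given $f_i$ to a given $f_j$. Taking a smallest orbit of size $k\le n-k$, the paper first rules out $k=1$ (since then all $n$ planes fix $f_1$, and counting where they can send $f_2$ gives at most $n-1$ distinct planes), and then for $k\ge 2$ counts that at most $k-1$ of the planes move $f_1$, so at least $n-k+1$ contain $f_1$; but these must send a neighbouring $f_2\ne -f_1$ injectively into the $k-1$ remaining orbit elements, contradicting $n-k+1>k-1$. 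This is lighter than your route: no group classification, no finiteness of $G$, no orbit tables.

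The real problem with your write-up is the $n=2$ axial subcase. You ``invoke the standing hypothesis $\sigma_1(\Sigma)<1$'', but that is \emph{not} a hypothesis of Lemma~\ref{lemm: transitivity}; the lemma is stated and proved independently of the contradiction argument in Theorem~\ref{thm: n symmetries}. In fact the lemma is \emph{false} for $n=2$: on the critical catenoid, any two reflection planes through the rotation axis fix both boundary circles, so the group they generate fixes $f_1$ and $f_2=-f_1$ and does not act transitively. The paper's own proof also silently fails here (the ``$f_2=f_3=\cdots=f_n$ contradicts Corollary~\ref{cor: flux components}'' step is vacuous when $n=2$), and the lemma is only applied later for $n>2$. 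So the correct resolution is not to import $\sigma_1<1$, but simply to note that the lemma is needed and valid only for $n\ge 3$; your axial-case orbit count already gives the contradiction there. Separately, your hemisphere claim (``$(\partial\Sigma)_i$ lies in the hemisphere about $f_i$'') is not justified by strict convexity alone, so even as an argument under $\sigma_1<1$ that step would need more care.

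Two smaller points worth tightening: you should say why $G$ is finite before invoking the classification (for $n\ge 3$ the $f_i$ span at least a plane, so the kernel of $G\to\operatorname{Sym}\{f_1,\dots,f_n\}$ has order at most $2$), and the orbit-sum observation $S_l\in V$ is attractive but is not actually used in your non-axial argument, which runs entirely on orbit sizes.
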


\begin{proof}
    Suppose the first statement is not true. We consider a graph with vertices $\{f_1,\dots, f_n\}$ and edges $(f_if_j), 1\le i,j\le n$ if there is a plane $\Pi\in\{\Pi_1,\dots, \Pi_n\}$ such that $R_{\Pi}(f_i)=f_j$. By the assumption, this graph has at least two components. Take one of the smallest components with $k$ vertices. Then, $k\le n-k$.

We show that $k\ge 2$. Suppose $k=1$ and let $f_1$ be such vertex. Then, every reflection, $R_{\Pi_1},\dots, R_{\Pi_n}$, maps $f_1$ into $f_1$. This $n$ reflections map $f_2$ into $\{f_1,\dots, f_n\}\setminus\{f_1\}$. If $f_2\neq -f_1$, by Corollary \ref{cor: flux components}, only one coordinate plane passes through $f_1$ and $f_2$. It implies that there is at most one reflection in $\{R_{\Pi_1},\dots, R_{\Pi_n}\}$ that map $f_2$ into $f_2$. In addition, using Lemma \ref{lemm: prop of a reflection} (\ref{lemm: prop of a reflection: identical}), there are at most $n-2$ reflections that map $f_2$ into $\{f_1,\dots, f_n\}\setminus\{f_1, f_2\}$. Thus, there are at most $n-1$ distinct reflections in $\{R_{\Pi_1},\dots, R_{\Pi_n}\}$, a contradiction. Thus, $f_2=-f_1$. In a similar argument, we can show that $f_2=f_3=\cdots=f_n$, which is $-f_1$. It contradicts Corollary $\ref{cor: flux components}$. Therefore, $k\ge 2$.

Let $\{f_1,\dots, f_k\}$ be the vertex set of the smallest component. By Lemma \ref{lemm: prop of a reflection} (\ref{lemm: prop of a reflection: identical}), there are at most $k-1$ reflection planes in $\{\Pi_1,\dots, \Pi_n\}$ that do not pass through $f_1$. Therefore, there must be at least $n-k+1$ reflection planes in $\{\Pi_1,\dots, \Pi_n\}$ that pass through $f_1$. Suppose $k=2$ and $f_2=-f_1$. In this case, there is exactly one reflection plane in $\{\Pi_1,\dots, \Pi_n\}$ that does not pass through $f_1$. Because every reflection plane passing through $f_1$ maps $f_3$ into $\{f_3,\dots, f_n\}$, Lemma \ref{lemm: prop of a reflection} further implies that there can be at most $n-2$ distinct reflection planes passing through $f_1$ in $\{\Pi_1,\dots, \Pi_n\}$. Then, there are at most $n-1$ reflections in $\{\Pi_1,\dots, \Pi_n\}$, contrary to Corollary \ref{cor: flux components}. Therefore we can take $f_2\neq -f_1$, and we can assume $f_2$ is connected to $f_1$ by an edge. By Lemma \ref{lemm: prop of a reflection} (\ref{lemm: prop of a reflection: identical}),  the reflection planes in $\{\Pi_1,\dots, \Pi_n\}$ that pass through $f_1$ maps $f_2$ one-to-one into $\{f_2,\dots, f_k\}$. Thus, there are at most $k-1$ such reflections. However, $n-k+1>k-1$ leads to a contradiction.

The second statement follows from the first statement with an argument in the proof of Corollary \ref{cor: f_i and a boundary component}. 
\end{proof}

Now we prove the main theorem.

\begin{proof}[Proof of Theorem \ref{thm: n symmetries}]
The case for $n=1$ is trivial by the classical work by Nitsche \cite{Nit85SPC}. For $n=2$, our theorem is known in \cite[Theorem 1.2]{Seo2021SSC}. Thus, we only need to consider $n>2$. We prove this theorem by proof by contradiction. We assume $\sigma_1(\Sigma)<1$. We will obtain a contradiction by following the steps.\\

\textit{Step 1.} Let $\Pi_1,\dots, \Pi_n$ be the given reflection planes of $\Sigma$. We show that for each $1\le i,j\le n$, there is $\Pi \in \{\Pi_1, \dots, \Pi_n\}$ such that $R_{\Pi}((\partial \Sigma)_i)=(\partial \Sigma)_j$. By Corollary \ref{cor: f_i and a boundary component} and Lemma \ref{lemm: prop of a reflection} (\ref{lemm: prop of a reflection: identical}), it is sufficient to prove that there is exactly one reflection plane in $\{\Pi_1, \dots, \Pi_n\}$ that passes through each $f_i$. Suppose there are no reflection planes in $\{\Pi_1, \dots, \Pi_n\}$ that pass through $f_i$. Then, by Lemma \ref{lemm: prop of a reflection} (\ref{lemm: prop of a reflection: identical}), there are at most $n-1$ distinct reflections in $\{R_{\Pi_1},\dots, R_{\Pi_n}\}$, a contradiction. If there are two reflections in $\{R_{\Pi_1},\dots, R_{\Pi_n}\}$ that pass through $f_i$, by Lemma \ref{lemm: constant sign}, a first eigenfunction $u$ has a constant sign on $(\partial \Sigma)_i$. By Lemma \ref{lemm: transitivity} with Corollary \ref{cor: f_i and a boundary component} and Lemma \ref{lemm: symmetry of a first eigenfunction}, $u$ has a constant sign on $\partial \Sigma$. It contradicts Lemma \ref{lemm: perpendicular to constant} (\ref{lemm: perpendicular to constant: first eigenfunction}). \\

By Step 1, for each $1\le i, j \le n$, we can define $\Pi_{ij}\in\{\Pi_1,\dots, \Pi_n\}$ and $R_{ij}:= R_{\Pi_{ij}}$ such that 
\begin{align}
 R_{ij}((\partial \Sigma)_i)=(\partial \Sigma)_j.   
\end{align}
In addition, we show that each of the boundary components of $\Sigma$ has exactly two nodal points of $u$. By Lemma \ref{lemm: symmetry of a first eigenfunction} and Lemma \ref{lemm: perpendicular to constant} (\ref{lemm: perpendicular to constant: first eigenfunction}), $u$ changes its sign on each of the boundary components. In addition, every $(\partial\Sigma)_i$ has the same number of nodal points of $u$, and the number is at least two. By Lemma \ref{lemm: properties of the boundary} (\ref{lemm: properties of the boundary: nodal points}), the number should be two. 

Let $p_i^1, p_i^2$ be the nodal point of $u$ in $(\partial \Sigma)_i$. Then, by Lemma \ref{lemm: symmetry of a first eigenfunction}, $R_{ii}(p_i^1)=p_i^2$. Otherwise, $R_{ii}(p_i^1)=p_i^1$ and $R_{ii}(p_i^2)=p_i^2$. Then, by Lemma \ref{lemm: symmetry of a first eigenfunction} again, $u$ cannot change its sign on $(\partial \Sigma)_i$, a contradiction. These nodal points divide each of the boundary components into two arcs. \\

\textit{Step 2.} We show that the two arcs of $(\partial \Sigma)_i\setminus \{p_i^1, p_i^2\}$ are not congruent. Suppose not. Then, either a nontrivial rotation or a reflection that maps one component of $(\partial \Sigma)_i\setminus\{p_i^1, p_i^2\}$ into the other and vice versa. If there is such a reflection, it cannot be $R_{ii}$. By Lemma \ref{lemm: constant sign}, $u$ has a constant sign on $(\partial \Sigma)_i$, a contradiction. If there is such a rotation $A$, $A(f_i)=f_i$ and $A(\{p_i^1, p_i^2\})=\{p_i^1, p_i^2\}$. For the case of $A(p_i^1)=p_i^1$ and $A(p_i^2)=p_i^2$, the rotation axis for $A$ passes through $f_i, p_i^1,$ and $p_i^2$. Since the axis meets $\mathbb{S}^2$ at exactly two points, $f_i$ should be either $p_i^1$ or $p_i^2$. However, $f_i$ lies in the interior of $D_i$ \cite[Proposition 3.5]{Seo2021SSC}, a contradiction. For the case of $A(p_i^1)=p_i^2$ and $A(p_i^2)=p_i^1$, we note that $A$ is the 180\textdegree rotation through the axis passing through the origin and $f_i$. Together with $R_{ii}$, $(\partial \Sigma)_i$ has a reflection symmetry distinct from $R_{ii}$. Lemma \ref{lemm: constant sign} again gives a contradiction.\\

We prove the following step using an argument on arcs.\\

\textit{Step 3.} We show that $\Pi_1, \dots, \Pi_n$ share a common axis. By the previous step, let $\widehat{p_1^1 p_1^2}$ and $\widehat{p_1^1 p_1^2}'$ be the components of $(\partial \Sigma)_1\setminus \{p_1^1, p_i^2\}$. Furthermore, we define an arc $\widehat{p_i^1 p_i^2}$ of $(\partial \Sigma)_i$ that is congruent to $\widehat{p_1^1 p_1^2}$. Clearly,
\begin{align}
    R_{ij}\left(\widehat{p_i^1 p_i^2}\right)= \widehat{p_j^1 p_j^2}.
\end{align} Then, we have
\begin{align}
    R_{2i}\circ R_{12}\left(\widehat{p_1^1 p_1^2}\right) = R_{1i}\circ R_{11} \left(\widehat{p_1^1 p_1^2}\right)=\widehat{p_i^1 p_i^2}.
\end{align}
For simplicity, let $A:=(R_{2i}\circ R_{12})^{-1}\circ (R_{1i}\circ R_{11})$ and let $q_1:= \widehat{p_1^1 p_1^2} \cap \Pi_{11}$. Then, $A(q_1)=q_1$ and $A(p_1^1)$ is either $p_1^1$ or $p_1^2$. Note that $R_{2i}\circ R_{12}$, $ R_{1i}\circ R_{11}$, and $A$ are all rotations along axes passing through the origin.  Then, $A$ is a rotation along the axis passing through the origin and $q_1$. Suppose $A$ is not the identity map. Since the axis and $\widehat{p_1^1 p_1^2}$ are invariant under $A$ and $R_{11}$, each of components of $\widehat{p_1^1 p_1^2}\setminus\{q_1\}$ lies in a plane that passes through the axis. Similarly, $\widehat{p_1^1 p_1^2}'$ lies in two planes passing through the axis. Since $(\partial \Sigma)_1$ is real analytic (see \cite[Section 6]{Lew1951MSP}), this boundary component lies in a plane,  so it is a circle. By \cite[Remark 6.2]{Seo2021SSC}, $\Sigma$ is congruent to one of an equatorial disk and a critical catenoid. It contradicts $n>2$. Thus $A$ is the identity map. Then, the rotation axes of $R_{2i}\circ R_{12}$ and $R_{1i}\circ R_{11}$ are identical. Since each of these axes is the intersection of the two reflection planes, $\Pi_{11}, \Pi_{12},$ and $\Pi_{1i}$ contain a common axis. Since $i$ is arbitrary and
\begin{align}
    \left\{\Pi_{11},\dots, \Pi_{1n}\right\}=\left\{\Pi_1,\dots, \Pi_n\right\},
\end{align}
we obtained our desired conclusion. \\

\textit{Step 4.} Let $x_3$-axis be the common axis of $\Pi_1,\dots, \Pi_n$ as in the previous step. By Step 1 and Lemma \ref{lemm: symmetry of a first eigenfunction}, every nodal point of $u$ in $\partial \Sigma$ has the same $x_3$-coordinates, say $c$. Moreover, using (\ref{eqn: flux}), we have
\begin{align}
    f_1, f_2, \dots, f_n \in \{x_3=0\}
\end{align}
By Lemma \ref{lemm: properties of the boundary} (\ref{lemm: properties of the boundary: intersection with a plane}), $u$ does not change its sign on each of $\partial \Sigma \cap \{x_3>c\}$ and $\partial \Sigma \cap \{x_3<c\}$. Thus, we have
\begin{align}
    \int_{\partial \Sigma}u(x_3-c)\neq 0,
\end{align}
which contradicts Lemma \ref{lemm: perpendicular to constant} (\ref{lemm: perpendicular to constant: first eigenfunction}).\\

Throughout the steps, we obtained a contradiction by assuming $\sigma_1(\Sigma)<1$. By Lemma \ref{lemm: perpendicular to constant} (\ref{lemm: perpendicular to constant: coordinate function}), we have $\sigma_1(\Sigma)=1$.

\end{proof}

For a surface of genus zero, the first eigenvalue of this surface has multiplicity at most 3 \cite[Theorem 2.3]{FS16SEB}. Main theorem implies the following.

\begin{corollary}
    If $\Sigma$ has $n$ reflection symmetries, the first Steklov eigenspace for $\Sigma$ is spanned by the three coordinate functions of $\mathbb{R}^3$.
\end{corollary}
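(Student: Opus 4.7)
The plan is to combine the main theorem with the cited multiplicity bound from Fraser--Schoen in a direct way. By Theorem \ref{thm: n symmetries}, the hypothesis that $\Sigma$ has $n$ distinct reflection symmetries forces $\sigma_1(\Sigma) = 1$. By Lemma \ref{lemm: perpendicular to constant}\,(\ref{lemm: perpendicular to constant: coordinate function}), each of the three coordinate functions $x_1, x_2, x_3$ is a Steklov eigenfunction of $\Sigma$ with eigenvalue $1$; combined with $\sigma_1(\Sigma) = 1$, this places all three coordinate functions inside the first Steklov eigenspace $E_1(\Sigma)$.

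Next I would invoke the cited multiplicity bound \cite[Theorem 2.3]{FS16SEB}, which asserts that for a surface of genus zero the first Steklov eigenvalue has multiplicity at most three, i.e.\ $\dim E_1(\Sigma) \le 3$. Since $x_1, x_2, x_3 \in E_1(\Sigma)$, it then suffices to show that these three functions are linearly independent on $\Sigma$, for then they form a basis of $E_1(\Sigma)$ and we are done.

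For linear independence, suppose $a_1 x_1 + a_2 x_2 + a_3 x_3 \equiv 0$ on $\Sigma$ for some $(a_1,a_2,a_3)\ne 0$. Then $\Sigma$ lies in the coordinate plane $\{a\cdot x = 0\}$, which forces $\Sigma$ to be a planar minimal surface with free boundary in $\mathbb{B}^3$, hence an equatorial disk. In that case $n=1$ and exactly one of the coordinate functions vanishes identically on $\Sigma$; the remaining two still span $E_1(\Sigma)$, which has dimension $2$ in this classical case. In the nondegenerate case $n\ge 2$, Lemma \ref{lemm: properties of interior}\,(\ref{lemm: properties of interior: transversality}) already rules out $\Sigma$ being planar, so $x_1, x_2, x_3$ are linearly independent and span all of $E_1(\Sigma)$.

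I do not anticipate any genuine obstacle here, since the argument is just a packaging of Theorem \ref{thm: n symmetries} together with the multiplicity bound; the only minor care needed is the bookkeeping for the degenerate equatorial disk, which is why the conclusion is phrased as ``spanned by the three coordinate functions'' rather than as an equality of dimensions.
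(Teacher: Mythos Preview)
Your argument is correct and matches the paper's approach exactly: the paper states the corollary without proof, simply noting beforehand that the multiplicity bound of Fraser--Schoen combined with the main theorem yields the result. Your proposal just fills in the obvious details (including the linear-independence check and the $n=1$ disk case) that the paper leaves implicit.
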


\bibliographystyle{amsplain}
\bibliography{annot}

\end{document}